\newcommand{\uha}{^{\frac{1}{2}}}
\newcommand{\umha}{^{-\frac{1}{2}}}
\newcommand{\innCnn}{\in\mathbb{C}^{n\times n}}
\newcommand{\innCrr}{\in\mathbb{C}^{r\times r}}
\newcommand{\innCrn}{\in\mathbb{C}^{r\times n}}
\newcommand{\innCnr}{\in\mathbb{C}^{n\times r}}
\newcommand{\innbCnnmr}{\in\mathbb{C}^{n\times n-r}}
\newcommand{\innCnmr}{\in\mathbb{C}^{n-r}}
\DeclareMathOperator*{\spann}{span}
\DeclareMathOperator*{\argmax}{argmax}
\newcommand{\U}{\mathcal{U}}
\newcommand{\beqo}{\begin{eqnarray*}}
\newcommand{\beq}{\begin{eqnarray}}
\newcommand{\eeqo}{\end{eqnarray*}}
\newcommand{\eeq}{\end{eqnarray}}
\newcommand{\mat}{\left[ \begin{array}{c}  }
\newcommand{\rix}{\end{array} \right] }
\numberwithin{equation}{section}
\newcommand{\im} {{\cal R}}
\newcommand{\ran} {{\cal R}}
\newcommand{\bC}{\mathbb{C}}
\newcommand{\C}{\mathbb{C}}
\newcommand{\bCn}{\mathbb{C}^n}
\newcommand{\Cn}{\mathbb{C}^n}
\newcommand{\Cnn}{\mathbb{C}^{n \times n}}
\newcommand{\Crn}{\mathbb{C}^{r \times n}}
\newcommand{\Cnr}{\mathbb{C}^{n \times r}}
\newcommand{\Cnnr}{\mathbb{C}^{n \times n-r}}
\newcommand{\inCnn}{\in \mathbb{C}^{n \times n}}
\author{
Luis Garc\'{i}a Ramos\footnotemark[1]
\and
Reinhard Nabben\footnotemark[1]
}
\title{On Optimal Algebraic Multigrid Methods}
\begin{document}

\maketitle
\renewcommand{\thefootnote}{\fnsymbol{footnote}}
\footnotetext[1]{
Technische Universit\"at Berlin, Institut f\"ur Mathematik, Stra\ss e des 17.
Juni 136, D-10623 Berlin,
Germany
 (\{garcia, nabben\}@math.tu-berlin.de). 
}

\renewcommand{\thefootnote}{\arabic{footnote}}
\begin{abstract}
In this note we present an alternative way to obtain optimal
interpolation operators for two-grid methods applied to Hermitian positive
definite linear systems.  In \cite{FalVZ05,Zik08} the $A$-norm of the error
propagation operator of algebraic multigrid methods is characterized. These
results are just recently used in \cite{XuZ17, Bra18} to determine optimal
interpolation operators. Here we use a characterization not of the $A$-norm but
of the spectrum of the  error propagation operator of two-grid methods, which
was proved in  \cite{GarKN18}. This characterization holds for arbitrary
matrices. For Hermitian positive definite systems this result   leads to
optimal interpolation operators with respect to the $A$-norm in a short way, 
moreover, it also leads to optimal interpolation operators with respect to the
spectral radius. For the symmetric two-grid method (with pre- and 
post-smoothing)
the optimal interpolation operators are the same. But for a two-grid method
with only post-smoothing the optimal interpolations (and hence the
optimal algebraic multigrid
methods) can be   different.  Moreover, using the  characterization of the
spectrum,
we can show that the found  optimal interpolation operators are also optimal
with
respect to the condition number of the multigrid preconditioned system.  
\end{abstract}

\begin{keywords}
multigrid, optimal interpolation operator, two-grid methods
\end{keywords}

\begin{AMS}
65F10, 65F50, 65N22, 65N55.
\end{AMS}

\pagestyle{myheadings}
\thispagestyle{plain}
\markboth{L. Garc\'{i}a Ramos, R. Nabben} {Optimal Algebraic Multigrid}

\section{Introduction}
Typical multigrid methods to solve the linear system 
\[
Ax = b,
\]
where $A$ is an $n \times n$ matrix, consist  of two ingredients, the smoothing
and  the
coarse grid correction. The smoothing is typically done by a
few
steps of a basic stationary iterative method, like the Jacobi or Gauss-Seidel
method.  For the coarse grid correction,
 a {\it prolongation} or {\it interpolation}
operator $P \in \Cnr$ and a   {\it
restriction} operator $R \in \Crn$  are needed. The coarse grid matrix is then
defined as
\beq \label{def:multAC}
A_C :=  RAP \innCrr.
\eeq 

Here we always assume  that $A$ and $A_C$ are  non-singular. 
The multigrid or algebraic multigrid (AMG) error
propagation matrix  is then given by
 \beq \label{mgiteration}
E_M = (I-M_2^{-1}A)^{\nu_2}(I -  PA_C^{-1}RA)(I-M_1^{-1}A)^{\nu_1},
\eeq
where $M_1^{-1} \innCnn$ and $M_2^{-1} \innCnn$  are   {\it smoothers}, $\nu_1$
and $\nu_2$  are the number of pre- and post-smoothing steps respectively, and
$PA_C^{-1}R$
is
the
{\it
coarse grid
correction} matrix. The multigrid method is convergent if and only if the
spectral radius of the
error propagation matrix
$\rho(E_m)$ is less than one.
Alternatively, the norm of the error propagation matrix $\|E_M\|$ 
can be considered, where
$\|\cdot\|$
is
a
consistent matrix
norm, and in this case one has  
\[
\rho(E_M) \leq \|E_M\|.
\]  
The aim of algebraic multigrid methods is to balance the interplay between
smoothing and coarse grid correction steps. However, most of the existing AMG
methods first fix a smoother and then optimize a certain quantity to choose
the interpolation $P$ and restriction $R$.

To simplify the analysis, we assume that there exists a non-singular matrix $X$
such that
\beq \label{mgx}
(I-X^{-1}A) = (I-M_1^{-1}A)^{\nu_1}(I-M_2^{-1}A)^{\nu_2},
\eeq
it can be shown that such a  non-singular matrix $X$ exists if the spectral
radius of $
(I-M_1^{-1}A)^{\nu_1}(I-M_2^{-1}A)^{\nu_2}$ is less  than one, see e.g.
\cite{BenS97}. Note that the matrix $E_M$ can be
written  as 
\beq \label{mgb}
E_M = I-BA,
\eeq
where the  matrix $B$ is known as the multigrid preconditioner, i.e., $B$ is an
approximation of $A^{-1}$.
Therefore,
eigenvalue estimates of $BA$ are of interest and  they lead to
estimates for
the eigenvalues of
$E_M$.


%




The following theorem, proved by Garc{\'i}a Ramos, Kehl  and Nabben in
\cite{GarKN18},
gives a characterization of the spectrum of $BA$, denoted by $\sigma(BA)$,  and
hence a
characterization of the spectrum of the general error propagation matrix $E_M$.

\begin{theorem} \label{theo:mg:eig}
Let $A \innCnn$ be  non-singular, and let   $P \innCnr $ and  $R \innCrn $ such
that $RAP$ is non-singular. Moreover, let $M_1 \innCnn$ and $M_2 \innCnn $ be
such  that  that the matrices $X$ in \eqref{mgx} and $RXP$ are  non-singular.
Then the following statements hold:
\begin{enumerate}
\item[(a)] The multigrid preconditioner $B$ in
\eqref{mgb}  is non-singular. 

\item[(b)] If $\tilde P, \tilde R \in \bC^{n \times n-r}$ are matrices
such that the columns  of
$\tilde P$ and $\tilde R $ form  orthonormal  bases of $(\im
(P))^\perp$ and
$(\im (R^{H}))^\perp$ (the orthogonal complements of $\im
(P)$ and $\im (R^{H})$ in the Euclidean inner product)  respectively,
then
the
matrices
$\tilde
P^HA^{-1}\tilde
R$ and $\tilde P^HX^{-1}\tilde R$
are
non-singular
and the spectrum of
$BA$  is given by
\[\sigma(BA) = \{1\} \cup \sigma(\tilde P^HX^{-1}\tilde R (\tilde
P^HA^{-1}\tilde
R)^{-1}).\]

\end{enumerate}
\end{theorem}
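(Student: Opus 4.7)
The plan is to reduce the three-factor product $E_M$ to a two-factor product by exploiting cyclic invariance of the spectrum, then peel off the $r$-dimensional kernel of the coarse-grid projector and identify the remaining eigenvalues with those of a smaller generalized eigenvalue problem of size $n-r$.

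First I would set $Q := P(RAP)^{-1}R$, so that the coarse-grid correction factor is $I-QA$, an oblique projector with range $\ker(RA)$ and kernel $\im(P)$. Applying the identity $\sigma(MN)=\sigma(NM)$ to the three factors of \eqref{mgiteration}, together with \eqref{mgx}, gives
\[
\sigma(E_M) \;=\; \sigma\!\left((I-M_2^{-1}A)^{\nu_2}(I-QA)(I-M_1^{-1}A)^{\nu_1}\right) \;=\; \sigma\!\left((I-X^{-1}A)(I-QA)\right).
\]
Because $I-QA$ has an $r$-dimensional kernel equal to $\im(P)$, zero is always an eigenvalue of $E_M$, so $1\in\sigma(BA)$ regardless of the other quantities.

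Next I would characterise the nonzero eigenvalues of $E_M$. For an eigenpair $E_M v=\lambda v$ with $\lambda\neq 0$, set $w=(I-QA)v\in\ker(RA)$; then $(I-X^{-1}A)w=\lambda v$, and applying $I-QA$ once more, using $(I-QA)Pc=0$, yields $(1-\lambda)w = (I-QA)X^{-1}Aw$, which is equivalent to the existence of $c$ with $X^{-1}Aw-(1-\lambda)w\in\im(P)$. Substituting $u=Aw$ converts $w\in\ker(RA)$ into $u\in\ker(R)=(\im(R^H))^\perp$, and since the columns of $\tilde R$ form an orthonormal basis of that subspace, $u=\tilde R y$ for some $y\in\C^{n-r}$. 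Expressing the containment ``$X^{-1}\tilde R y-(1-\lambda)A^{-1}\tilde R y\in\im(P)$'' as orthogonality to the columns of $\tilde P$ gives
\[
\tilde P^HX^{-1}\tilde R\,y \;=\; (1-\lambda)\,\tilde P^HA^{-1}\tilde R\,y.
\]
Reversing the substitutions shows that any nonzero pair $(y,1-\lambda)$ with $\lambda\neq 0$ produces, via $w=A^{-1}\tilde R y$ and $v=(I-X^{-1}A)w$, a genuine eigenvector of $E_M$ (non-triviality of $v$ follows because $(I-QA)(I-X^{-1}A)w=\lambda w\neq 0$).

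The invertibility of the two pencil matrices is a short kernel chase: if $\tilde P^HA^{-1}\tilde R y=0$, then $A^{-1}\tilde R y\in\im(P)$, so $\tilde R y=APc$ for some $c$; applying $R$ gives $RAPc=0$, and non-singularity of $RAP$ forces $c=0$, hence $\tilde R y=0$ and $y=0$ by orthonormality of $\tilde R$. Replacing $A$ by $X$ and $RAP$ by $RXP$ yields the same conclusion for $\tilde P^HX^{-1}\tilde R$. Consequently the reduced matrix $\tilde P^HX^{-1}\tilde R\,(\tilde P^HA^{-1}\tilde R)^{-1}$ is non-singular, and combining with the cyclic identity gives
\[
\sigma(BA) \;=\; \{1\}\cup\sigma\!\left(\tilde P^HX^{-1}\tilde R\,(\tilde P^HA^{-1}\tilde R)^{-1}\right),
\]
which proves (b); part (a) is then immediate, because the right-hand side contains no zero and therefore $BA$ (hence $B$) is non-singular.

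I expect the main obstacle to be the bookkeeping of the correspondence between $n$-dimensional eigenpairs of $E_M$ and $(n-r)$-dimensional eigenpairs of the reduced pencil. One has to track that each substitution $w=(I-QA)v$, $u=Aw$, $u=\tilde R y$ preserves non-triviality when $\lambda\neq 0$, and in the reverse direction that the reconstructed $v=(I-X^{-1}A)w$ is genuinely an eigenvector of $E_M$ for the same eigenvalue rather than merely of the cyclically permuted product $(I-QA)(I-X^{-1}A)$. Everything else is direct linear algebra once this correspondence is in place.
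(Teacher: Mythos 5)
Your proof is correct, and it is essentially self-contained: note that the paper does not prove Theorem~\ref{theo:mg:eig} at all but quotes it from \cite{GarKN18}, so there is no in-text proof to match your argument against; your route --- cyclic invariance of the spectrum to replace $E_M$ by $(I-X^{-1}A)(I-QA)$ with $Q=P(RAP)^{-1}R$, the observation that $I-QA$ is the projector with range $\ker(RA)$ and kernel $\im(P)$, and the reduction of each nonzero eigenvalue $\lambda$ of that product to the $(n-r)\times(n-r)$ pencil $\tilde P^HX^{-1}\tilde R\,y=(1-\lambda)\,\tilde P^HA^{-1}\tilde R\,y$ --- is a legitimate elementary derivation of both the spectral identity and the two invertibility claims. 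Two presentational remarks. First, the ``main obstacle'' you anticipate (recovering an eigenvector of $E_M$ itself rather than of a cyclically permuted product) is not actually needed: the theorem asserts only an equality of spectra, and since $\sigma(MN)=\sigma(NM)$ it suffices to exhibit, for each admissible $\lambda\neq 0$, an eigenvector of either ordering of the two-factor product; your construction $w=A^{-1}\tilde R y$, $v=(I-X^{-1}A)w$ already does this, so the eigenvector bookkeeping is a bonus, not a requirement. Second, you invoke the invertibility of $\tilde P^HA^{-1}\tilde R$ (to pass from the generalized eigenproblem to the matrix $\tilde P^HX^{-1}\tilde R(\tilde P^HA^{-1}\tilde R)^{-1}$) before you establish it; since your kernel chase (using $R\tilde R=0$, $\im(\tilde P)^\perp=\im(P)$, and the non-singularity of $RAP$, respectively $RXP$) is independent of the earlier steps, simply place those two invertibility arguments first. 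With that reordering the logic is airtight, and part (a) follows exactly as you say, because $0\notin\{1\}\cup\sigma\bigl(\tilde P^HX^{-1}\tilde R(\tilde P^HA^{-1}\tilde R)^{-1}\bigr)$ and $A$ is non-singular.
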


We will apply this theorem to Hermitian positive definite (HPD)  matrices to
determine
the
optimal interpolation operators of AMG methods with respect to
the
spectral radius of the error propagation matrix.
For HPD 
matrices, optimal interpolation  operators with respect to the
$A$-norm have been obtained recently in \cite{XuZ17, Bra18}.
We will show that the optimal interpolation operators with respect to the
spectral
radius
for
the
symmetric/symmetrized
multigrid
method
(with
pre-
and
post-smoothing) and
the
optimal
interpolation operator with respect to the $A$-norm are the same. But for
multigrid
with
only
a
post-smoothing step
the optimal interpolation operators with respect to the spectral radius and
$A$-norm
(and
hence
the
optimal
algebraic
multigrid
methods)
can be   different. Using Theorem \ref{theo:mg:eig} we can also show that the 
interpolation operators with respect to the spectral radius are also optimal 
with
respect to the condition number of the multigrid preconditioned system. 

\section{Optimal interpolation  for Hermitian positive definite matrices}

Let  $A \in \Cnn$ be HPD and recall that the
norm
induced
by
$A$ (or $A$-norm) is defined for $v \in \bCn$ and $S
\in \Cnn$ by
\[
\| v \|_A^2 = (v,v)_A = \|A\uha v\|_2^2,
\]
and 
\[
\| S \|_A = \|A\uha S A\umha\|_2.
\]

We will study the  following two-grid  methods given by the error
propagation
operators
\beq \label{mge}
E_{TG} = (I-M^{-H}A)(I -  PA_C^{-1}P^HA)
\eeq
 and the symmetrized version
\beq \label{smge}
E_{STG} = (I-M^{-H}A)(I -  PA_C^{-1}P^HA)(I-M^{-1}A).
\eeq
Thus we are using $R = P^H$. The range of $P$,
i.e.
$\ran
(P)$,
is
called
the
coarse space $V_c$.
We assume that the smoother $M^{-1}$ is fixed and let $E_{TG}$ and $E_{STG}$
vary
with
respect  to the choice of the interpolation operator $P$. In addition, we
assume that the smoother
$M^{-1}$
satisfies 
\[
 \|I-M^{-1}A \|_A < 1,
\]
which is equivalent to the condition
\beq \label{eq:pos}
M +  M^{H} - A  \quad \mbox{is  positive definite,} 
\eeq
see, e.g., \cite{Vas08}. Given a fixed  smoother $M^{-1}$ such that $\|
I-M^{-1}A\|_A < 1$, many AMG
methods are designed to minimize $ \|E_{TG}\|_A$ or a related quantity. We
say an interpolation operator $P^\star$ is optimal
if it minimizes  $ \|E_{TG}\|_A$. In
view of the equality
\beq \label{normeq}
\|E_{STG}\|_A = \|E_{TG}\|_A^2,
\eeq
proved by 
Falgout
and Vassilevski in
\cite{FalV04}, we can conclude that an optimal interpolation operator
$P^\star$  
also minimizes $\|E_{STG}\|_A$. Zikatanov
proved in \cite[Lemma 2.3]{Zik08} (see also \cite[Theorem 4.1]{FalVZ05}) that
\[
 \|E_{TG}\|_A^2 = 1 - \frac{1}{K(V_c)},
\]
where  $ K(V_c)$ is  a  quantity  depending  on the  coarse space, defined by

\[ K(V_c) = \sup_{v \in \Cn} \frac{\|(I-Q)v\|_{\tilde M}^2}{\|v\|_A^2}. \]
Here  $ \tilde M^{-1} = M^{-1} + M^{-H} - M^{-1} A M^{-H}$ is the symmetrized
smoother and $Q
= P(P^T\tilde M P)^{-1}\tilde M$. Although  this 
equality has been known for a long time, only  recently it was used to
determine
optimal prolongation operators formulated in terms of eigenvectors,
which lead  to a minimal  value of
$\|E_{TG}\|_A$ for a given smoother (see \cite{XuZ17, Bra18}).  We will give an
alternative proof of this result using the
characterization
of
the
eigenvalues of the multigrid iteration operator  given in Theorem
\ref{theo:mg:eig}.  

We  consider first  the  more general error propagation matrix $E_M$ in
\eqref{mgiteration} with $R= P^H$  and  $E_{M} = I - BA$. Let $\U=
\mathcal{R}(P) $ be 
the range of the interpolation operator
$P \in \Cnr$, and $\tilde U \in \C^{n \times n-r} $ be a matrix with
orthonormal columns that span
$\U
^\perp$ (the
orthogonal complement  of $\U$ with respect to the Euclidean inner product).
Then
Theorem \ref{theo:mg:eig}  leads to
\[
\sigma (BA) = \{1\} \cup \sigma(\tilde U^HX^{-1}\tilde U (\tilde
U^HA^{-1}\tilde U)^{-1}).
\]

In what follows, given a matrix $C \inCnn$ with real eigenvalues we will denote
by
$\lambda_{\max}(C)$ and
$\lambda_{\min}(C)$
the maximum and minimum eigenvalues of $C$ respectively.

Assuming that $X$ is
Hermitian positive definite and that
$\lambda_{\max}(BA)$ is at most one, we have
$\rho(E_M) = 1 - \lambda_{\min}(BA)$. In order to find an optimal interpolation
operator for the error propagation matrix,  we need  to first find
\[  \tilde{U}^\star \in \argmax_{\tilde U \innbCnnmr,\, \tilde
U^H\tilde U = I}
\lambda_{\min}(\tilde
U^HX^{-1}\tilde
U
(\tilde
U^HA^{-1}\tilde U)^{-1}),
\]



and then find an interpolation operator $P^\star \in \Cnr$ such that
$\im(P^\star) =\im(\tilde U^\star)^{\perp}$. The following lemma  solves the 
first problem.

\begin{lemma} \label{theo:lemma1}
Let $A, X \innCnn$ be Hermitian positive definite and let
$\{(\mu_i,w_i)\}_{i=1}^n$ be the eigenpairs of the generalized eigenvalue
problem
 \[X^{-1}w = \mu
A^{-1}w,\]
where
\beq
0 < \mu_1 \leq \mu_2 \leq \ldots \leq  \mu_n.
\eeq
Then
\[\max_{\tilde U \innbCnnmr, \, \tilde U^H \tilde U=I} \lambda_{\min} (\tilde
U^HX^{-1}\tilde U
(\tilde
U^HA^{-1}\tilde U)^{-1}) = \mu_{r+1}
\]
which is achieved by 
\[
\tilde W = [\tilde{w}_{r+1}, \ldots, \tilde{w}_n],
\in \C^{n-r}\] 
where the columns of $\tilde W$ are orthogonal in the Euclidean inner 
product and satisfy
$\spann\{\tilde{w}_i\}_{i=1}^n = \spann\{w_i\}_{i=1}^n$.
\end{lemma}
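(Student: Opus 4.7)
The plan is to recognise $\lambda_{\min}(\tilde U^H X^{-1}\tilde U (\tilde U^H A^{-1}\tilde U)^{-1})$ as the smallest eigenvalue of a compressed HPD matrix pencil, rewrite it as the minimum of a Rayleigh-type quotient over the subspace $\ran(\tilde U)$, and then invoke the Courant--Fischer--Weyl max-min characterisation of the generalised Hermitian eigenvalue problem $X^{-1}w = \mu A^{-1}w$.

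First I would note that since $A$ and $X$ are HPD, so are $A^{-1}$ and $X^{-1}$; hence for every $\tilde U \innbCnnmr$ with $\tilde U^H\tilde U = I$ the compressed matrices $\tilde U^H X^{-1}\tilde U$ and $\tilde U^H A^{-1}\tilde U$ are HPD, the product $\tilde U^H X^{-1}\tilde U (\tilde U^H A^{-1}\tilde U)^{-1}$ has real positive eigenvalues, and its smallest eigenvalue can be written as
\[
\lambda_{\min}\bigl(\tilde U^H X^{-1}\tilde U (\tilde U^H A^{-1}\tilde U)^{-1}\bigr) \;=\; \min_{y\neq 0}\frac{y^H\tilde U^H X^{-1}\tilde U y}{y^H\tilde U^H A^{-1}\tilde U y}.
\]
Under the substitution $v = \tilde U y$, which is a bijection from $\mathbb{C}^{n-r}\setminus\{0\}$ onto $\ran(\tilde U)\setminus\{0\}$ because $\tilde U$ has full column rank, this becomes $\min_{v\in\ran(\tilde U),\,v\neq 0}(v^HX^{-1}v)/(v^HA^{-1}v)$, a quantity that depends on $\tilde U$ only through its range.

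Second, since every $(n-r)$-dimensional subspace $\V\subset\mathbb{C}^n$ arises as $\ran(\tilde U)$ for some $\tilde U$ with Euclidean-orthonormal columns, the outer maximisation turns into
\[
\max_{\tilde U\innbCnnmr,\,\tilde U^H\tilde U=I}\lambda_{\min}\bigl(\tilde U^HX^{-1}\tilde U(\tilde U^HA^{-1}\tilde U)^{-1}\bigr)
\;=\;\max_{\dim\V=n-r}\min_{v\in\V,\,v\neq 0}\frac{v^HX^{-1}v}{v^HA^{-1}v}.
\]
By the Courant--Fischer--Weyl characterisation applied to the HPD pencil $(X^{-1},A^{-1})$, whose generalised eigenvalues are precisely $\mu_1 \leq \cdots \leq \mu_n$, the right-hand side equals the $(r+1)$-th smallest generalised eigenvalue $\mu_{r+1}$, with maximiser $\V^\star = \spann\{w_{r+1},\ldots,w_n\}$.

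Finally, to display an optimiser in the form required by the lemma I would Gram--Schmidt-orthogonalise $w_{r+1},\ldots,w_n$ in the Euclidean inner product, producing vectors $\tilde w_{r+1},\ldots,\tilde w_n$ with the same span; the matrix $\tilde W=[\tilde w_{r+1},\ldots,\tilde w_n]$ then has orthonormal columns and attains the maximum, establishing the claim. I do not expect any substantive obstacle: the argument is essentially a textbook application of Courant--Fischer, and the only care needed is the verification that the compressed pencil inherits positive definiteness, that the change of variables identifies the matrix Rayleigh quotient with its subspace counterpart, and that the correct index appears ($(n-r)$-dimensional subspaces correspond to the $(r+1)$-th smallest eigenvalue in the max-min form).
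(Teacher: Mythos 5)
Your proof is correct and follows essentially the same route as the paper: rewrite $\lambda_{\min}$ of the compressed pencil as the minimum of the Rayleigh quotient $v^HX^{-1}v/v^HA^{-1}v$ over $\ran(\tilde U)$, then apply the Courant--Fischer max--min characterisation over $(n-r)$-dimensional subspaces to obtain $\mu_{r+1}$, attained on $\spann\{w_{r+1},\ldots,w_n\}$ with an orthonormalised basis. The extra details you supply (positive definiteness of the compressions, the change of variables, Gram--Schmidt) are exactly the steps the paper leaves implicit.
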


\begin{proof}
Let $\tilde{U} \in \C^{n \times n-r }$ with $\tilde{U}^H\tilde{U} = I$. By the
Courant-Fischer theorem we obtain 
\begin{align*}
\lambda_{\min} (\tilde U^HX^{-1}\tilde U (\tilde U^HA^{-1}\tilde U)^{-1}) & =
\min_{\substack{z \in \Cn \\ z \neq 0}} \frac{z^H \tilde U X^{-1} \tilde U z
}{
z^H
\tilde U^H A^{-1} \tilde Uz}\\
& = \min_{ \substack{ z \in \im(\tilde U) \\ z \neq 0}}\frac{z^{H}
X^{-1}z}{z^HA^{-1}z},
\end{align*}

Thus, if $\mathbf{V}$ is the set of subspaces of $\C^n$ of dimension $n-r$, we
have
\[\max_{\tilde{U} \in \Cnnr, \, \tilde{U}^H\tilde U = I }
\lambda_{\min} (\tilde U^HX^{-1}\tilde U (\tilde U^HA^{-1}\tilde U)^{-1}) =
\max_{\tilde{\mathcal{U}} \in \mathbf V } \min_{ \substack{ z \in
\tilde{\mathcal{U}}
\\
z \neq 0}}
\frac{z^H X^{-1} z}{z^HA^{-1}z}  = \mu_{r+1},
\]
and the maximum is attained by choosing a matrix $\tilde W =
[\tilde w_{r+1}, \ldots, \tilde w _{n}]$ such that 
the columns of $\tilde W$ are orthogonal in the Euclidean inner 
product and satisfy
$\spann\{\tilde{w}_i\}_{i=1}^n = \spann\{w_i\}_{i=1}^n$. 
\end{proof}

The previous lemma is the main tool to obtain the optimal interpolation
operators.

\begin{theorem}\label{theo:main}
Let $A \innCnn$ and $ X \innCnn$ as in \eqref{mgx} be Hermitian positive
definite. Let $\{(\lambda_i, u_i)\}_{i=1}^n$ be the
eigenpairs
of $X^{-1}A$, where $
\lambda_1 \leq \lambda_2 \leq \ldots \leq  \lambda_n $,  and suppose that
$\lambda_{\max}(BA) \leq 1$. Then
\beq
\min_{\substack{P \in \Cnr \\ \rank(P)=r}} \rho(E_{M}) =  1 - \min_{\substack{P
\in \Cnr \\ \rank(P)=r}} \lambda_{\min}(BA) = 1 -
\lambda_{r+1}.
\eeq
An optimal interpolation operator is given by 
\begin{equation} \label{eq:popt}
P_{\mathrm{opt}} = [u_{1}, \ldots , u_r].
\end{equation}
\end{theorem}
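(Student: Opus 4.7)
The plan is to combine Theorem~\ref{theo:mg:eig} with Lemma~\ref{theo:lemma1} and then identify the optimal $\tilde U$ of the lemma with an explicit interpolation operator. First, I would apply Theorem~\ref{theo:mg:eig} with $R=P^H$: if $\tilde U \in \C^{n\times n-r}$ has orthonormal columns spanning $\mathcal{R}(P)^{\perp}$, then
\[
\sigma(BA) \;=\; \{1\}\;\cup\;\sigma\!\bigl(\tilde U^H X^{-1}\tilde U\,(\tilde U^H A^{-1}\tilde U)^{-1}\bigr).
\]
Since $A,X$ are HPD, both $\tilde U^H X^{-1}\tilde U$ and $\tilde U^H A^{-1}\tilde U$ are HPD, so the second piece of the spectrum is a set of positive reals. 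Combined with the hypothesis $\lambda_{\max}(BA)\leq 1$, this forces $\sigma(BA)\subset(0,1]$, hence $\rho(E_M)=1-\lambda_{\min}(BA)$. Minimizing $\rho(E_M)$ over $P$ is therefore equivalent to maximizing $\lambda_{\min}\bigl(\tilde U^H X^{-1}\tilde U(\tilde U^H A^{-1}\tilde U)^{-1}\bigr)$ over matrices $\tilde U$ with orthonormal columns.

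Next, Lemma~\ref{theo:lemma1} gives this maximum as $\mu_{r+1}$, where the $\mu_i$ are the eigenvalues of the pencil $X^{-1}w = \mu A^{-1}w$. To match the statement of the theorem I would observe that $X^{-1}w = \mu A^{-1}w$ is equivalent to $AX^{-1}w = \mu w$, and that $AX^{-1}=X(X^{-1}A)X^{-1}$ is similar to $X^{-1}A$; hence $\mu_i = \lambda_i$ for each $i$, and the minimum value of $\rho(E_M)$ is $1-\lambda_{r+1}$.

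It remains to exhibit an interpolation operator that attains the bound, and this is where the slightly subtle step lies: the optimal subspace in Lemma~\ref{theo:lemma1} is $\mathcal{R}(\tilde U)=\mathrm{span}\{w_{r+1},\dots,w_n\}$, so I need to relate the $w_i$ to the $u_i$ and then take the Euclidean orthogonal complement to recover $P_{\mathrm{opt}}$. Setting $w_i := X u_i$, a direct check using $A u_i = \lambda_i X u_i$ gives $X^{-1}w_i = u_i = \lambda_i A^{-1}w_i$, so the $X u_i$ are generalized eigenvectors of the lemma's problem. The required subspace is thus $X\cdot\mathrm{span}\{u_{r+1},\dots,u_n\}$, and I need $\mathcal{R}(P_{\mathrm{opt}}) = \bigl(X\cdot\mathrm{span}\{u_{r+1},\dots,u_n\}\bigr)^{\perp}$. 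Here I would invoke the $X$-orthogonality of the eigenvectors of $X^{-1}A$ (which holds for the HPD pencil $(A,X)$): $u_j^H X u_i = 0$ whenever $\lambda_i\neq \lambda_j$, and in any case we may choose $X$-orthogonal eigenvectors within each eigenspace. This yields $u_j^H w_i = u_j^H X u_i = 0$ for $j\leq r<i$, so $\mathrm{span}\{u_1,\dots,u_r\}\subset\mathrm{span}\{w_{r+1},\dots,w_n\}^{\perp}$; a dimension count gives equality, so $P_{\mathrm{opt}}=[u_1,\dots,u_r]$ realizes the optimal $\tilde U$.

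The main obstacle is this final identification step: one must switch cleanly between the two pencils $(X^{-1},A^{-1})$ (used in the lemma) and $(A,X)$ (used to state the theorem), and then translate the optimal $\tilde U$ back to $P_{\mathrm{opt}}$ via Euclidean orthogonality. Once the $X$-orthogonality of the generalized eigenvectors is recorded, the remaining verifications — that $P_{\mathrm{opt}}$ has full column rank and that $P_{\mathrm{opt}}^H A P_{\mathrm{opt}}$ and $P_{\mathrm{opt}}^H X P_{\mathrm{opt}}$ are nonsingular, so that Theorem~\ref{theo:mg:eig} applies — are immediate since $A,X$ are HPD and the $u_i$ are linearly independent.
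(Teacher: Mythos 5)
Your proposal is correct and follows essentially the same route as the paper: reduce via Theorem~\ref{theo:mg:eig} and Lemma~\ref{theo:lemma1}, identify $\mu_i=\lambda_i$, set $w_i=Xu_i$, and use the $X$-orthogonality of the $u_i$ (equivalently the $X^{-1}$-orthogonality of the $w_i$, which is how the paper phrases it) to conclude that $\mathrm{span}\{u_1,\dots,u_r\}$ is the Euclidean orthogonal complement of $\mathrm{span}\{w_{r+1},\dots,w_n\}$. Your added details (the similarity argument for $\mu_i=\lambda_i$, the dimension count, and the nonsingularity of $P^HAP$ and $P^HXP$) only make explicit what the paper leaves implicit.
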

\begin{proof}
Since $\lambda_{\max}(BA) \leq 1$,  we have that 
\beqo
\rho(E_{M}) =  1 - \lambda_{\min}(BA).
\eeqo
Note that the eigenvalues $\lambda_i $ are  the same as the $\mu_i$ in Lemma
\ref{theo:lemma1}.
According to Lemma  \ref{theo:lemma1}, we need  to find vectors which are
orthogonal
to
the eigenvectors  $w_{r+1}, \ldots , w_n$ of the generalized eigenvalue problem
$X^{-1}w = \mu A^{-1}w$. Now, consider the 
vectors $\{u_i\}_{i=1}^r$.
The $u_i$ are also eigenvectors of the generalized eigenvalue problem  $Au =
\lambda Xu$. Moreover, the vectors  $Xu_i = w_i$  are  eigenvectors  of the
generalized
eigenvalue problem $X^{-1}w = \mu A^{-1}w$. But the $w_i$ are
$X^{-1}$-orthogonal (the $X\umha w_i$ are eigenvectors of the Hermitian matrix
$X\uha A^{-1} X\uha$). Thus, the $u_i$, $i = 1, \ldots, r$ are  orthogonal to
the  $w_{r+1}, \ldots , w_n$ in the Euclidean inner product and the
interpolation operator
$P_{\mathrm{opt}}$ given by \eqref{eq:popt} is the corresponding
minimizer. \end{proof}

Now, we consider $E_{TG}$ and $E_{STG}$  defined in \eqref{mge}  and
\eqref{smge}. Again   $E_{STG}$ and $E_{TG}$    can be written   as
\beqo
E_{STG} & = & I - B_{STG}A, \\
E_{TG} & = & I - B_{TG}A,
\eeqo
for some  matrices $B_{STG}$ and $B_{TG}$ in $\Cnn$. A straightforward
computation shows  that  $B_{STG}$  is Hermitian, and by 
\cite[Lemma 2.11]{Ben01} we have
\beq \label{ben}
\|E_{STG}\|_A = \|I - B_{STG}A\|_A = \rho(I - B_{STG}A).
\eeq
Moreover, the maximal eigenvalue of $B_{STG}A$ satisfies
$\lambda_{\max}(B_{STG}A) \leq 1$, see e.g. \cite[Theorem 3.16]{Vas08}.
We
then
obtain
\[
\|E_{TG}\|_A^2 = \|E_{STG}\|_A = \rho(I - B_{STG}A) = 1 -
\lambda_{min}(B_{STG}A).
\]

The matrix $X$ in \eqref{mgx} is given by 
\beq \label{defX}
X^{-1}_{STG} = M^{-1} +  M^{-H} - M^{-1} AM^{-H} = M^{-1}( M +  M^{H} -
A)M^{-H}.
\eeq
With \eqref{eq:pos} we have  that $X_{STG}$ is Hermitian positive definite.
 We obtain the following corollary.
\begin{corollary} \label{coro:one}
Let  $A\inCnn$  be Hermitian positive definite. Let $ M \inCnn$ such $M + M^H -
A$ is Hermitian positive definite, and
let $X_{STG}^{-1}$  be as in \eqref{defX},  and let $\{(\lambda_i,
u_i)\}_{i=1}^n$
be
the eigenpairs
of $X_{STG}^{-1}A$, where $
\lambda_1 \leq \lambda_2 \leq \ldots \leq  \lambda_n $, 
 Then
\beq
\min_{\substack{P \in \Cnr \\ \rank(P)=r}} \|E_{STG}\|_A =
\min_{\substack{P \in \Cnr \\ \rank(P)=r}}\rho(E_{STG}) = 
\min_{\substack{P \in \Cnr \\ \rank(P)=r}}\|E_{TG}\|_A^2 = 1 -
\lambda_{r+1}.
\eeq
An optimal interpolation operator is given by 
\[
P_{\mathrm{opt}} = [v_{1}, \ldots , v_r].
\]
\end{corollary}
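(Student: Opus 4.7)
The plan is to view Corollary~\ref{coro:one} as an immediate application of Theorem~\ref{theo:main} in the specific setting where $X$ is the symmetrized-smoother matrix $X_{STG}$ defined in \eqref{defX}. Before doing so, I would verify the two hypotheses of Theorem~\ref{theo:main}: that the relevant $X$ is Hermitian positive definite, and that $\lambda_{\max}(B_{STG}A)\le 1$.

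First I would check that $X_{STG}$ is HPD. Using the factorization
\[
X_{STG}^{-1} = M^{-1}(M+M^{H}-A)M^{-H},
\]
the assumption that $M+M^{H}-A$ is HPD, which by \eqref{eq:pos} is equivalent to $\|I-M^{-1}A\|_A<1$, shows that $X_{STG}^{-1}$ is congruent to an HPD matrix and hence HPD; therefore $X_{STG}$ itself is HPD. Next I would gather the scalar identities that reduce the three quantities $\|E_{STG}\|_A$, $\rho(E_{STG})$ and $\|E_{TG}\|_A^2$ to a single one. The symmetry of $B_{STG}$ together with \cite[Lemma~2.11]{Ben01}, already recalled in \eqref{ben}, gives $\|E_{STG}\|_A=\rho(I-B_{STG}A)$. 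The bound $\lambda_{\max}(B_{STG}A)\le 1$ from \cite[Theorem~3.16]{Vas08} then yields $\rho(E_{STG})=1-\lambda_{\min}(B_{STG}A)$. Finally the Falgout--Vassilevski identity \eqref{normeq} equates $\|E_{TG}\|_A^2$ with $\|E_{STG}\|_A$, so minimizing any of the three quantities over $P$ amounts to maximizing $\lambda_{\min}(B_{STG}A)$.

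With these preparations in place I would invoke Theorem~\ref{theo:main} with $X=X_{STG}$. The eigenpairs $\{(\lambda_i,u_i)\}$ of $X_{STG}^{-1}A$ in the corollary are exactly those appearing in Theorem~\ref{theo:main}, so the theorem provides
\[
\min_{\substack{P\in\Cnr\\ \rank(P)=r}} \rho(E_{STG}) = 1 - \lambda_{r+1},
\]
together with the explicit optimizer $P_{\mathrm{opt}}=[u_1,\ldots,u_r]$ (the letter $v$ in the statement being a typographical slip for the eigenvectors $u_i$ of $X_{STG}^{-1}A$). Combining this with the chain of equalities from the previous paragraph yields the triple identity for $\|E_{STG}\|_A$, $\rho(E_{STG})$ and $\|E_{TG}\|_A^2$ asserted in the corollary.

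The proof is essentially bookkeeping: all analytical substance is already present in Theorem~\ref{theo:main}, the Ben01 norm--spectral-radius identity, the Vassilevski bound $\lambda_{\max}(B_{STG}A)\le 1$, and the Falgout--Vassilevski squaring identity. The only step that is not entirely automatic is checking that $X_{STG}$ is HPD under the stated assumption on $M$, but this follows directly from the congruence factorization and \eqref{eq:pos}, so I do not anticipate a genuine obstacle.
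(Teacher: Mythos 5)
Your proof is correct and follows essentially the same route as the paper: verify that $X_{STG}$ is HPD via the factorization \eqref{defX} and \eqref{eq:pos}, use \eqref{ben}, the bound $\lambda_{\max}(B_{STG}A)\le 1$, and \eqref{normeq} to reduce all three quantities to $1-\lambda_{\min}(B_{STG}A)$, and then apply Theorem~\ref{theo:main} with $X=X_{STG}$. The paper's proof is just a terser version of this, relying on the same identities stated in the text immediately before the corollary (and you are right that the $v_i$ in the statement are the eigenvectors $u_i$ of $X_{STG}^{-1}A$).
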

\begin{proof}
We have  that $X_{STG}$ is positive definite and $\lambda_{\max}(B_{STG}A) \leq
1$. By Theorem \ref{theo:main} we obtain the desired result.
\end{proof}

Next, let us consider  the non-symmetric multigrid method defined implicitly
by $E_{TG}$, in \eqref{mge}.
We use a Hermitian positive  definite smoother $M^{-1}$. The matrix $X$ in
\eqref{mgx} is given by
\beq \label{defXtg}
X^{-1}_{TG} = M^{-1}.
\eeq

Hence
\beqo
\rho(E_{TG}) = 1 - \lambda_{\min}(B_{TG}A)
\ \  \mbox{or} \ \ 
\rho(E_{TG}) = -(1 - \lambda_{\max}(B_{TG}A)).
\eeqo

Therefore, it is not clear which of $\lambda_{\min}(B_{TG}A)$ 
or $\lambda_{\max}(B_{TG}A)$ equals the spectral radius.
One way to overcome  this problem is scaling. Note that we  have for all
Hermitian positive defnite matrices $X$ and $A$ and for all matrices $\tilde U
\in \Cnnr$
\beqo
\lambda_{\max}(\tilde U^HX^{-1}\tilde U (\tilde U^HA^{-1}\tilde
U)^{-1})
& = &  \max_{z \innCnmr} \frac{z^H\tilde U^HX^{-1}\tilde Uz}{ z^H\tilde
U^HA^{-1}\tilde Uz}\\
& = &  \max_{\tilde z \in {\ran (\tilde U)} } \frac{\tilde z^HX^{-1}\tilde z}{
\tilde z^HA^{-1}\tilde z}\\
& \leq &\max_{\tilde z \in \bCn  } \frac{\tilde z^HX^{-1}\tilde z}{
\tilde z^HA^{-1}\tilde z}\\
& = & \lambda_{\max}(X^{-1}A).
\eeqo

Hence, the  Hermitian smoother
\beqo
\hat M^{-1} = \frac{1}{\lambda_{\max}(M^{-1}A)}M^{-1}
\eeqo
satisfies
\beq \label{eq:spec1}
\lambda_{\max}(\hat M^{-1}A) = 1.
\eeq
With Theorem \ref{theo:mg:eig} and $X^{-1} = \hat M^{-1}$ we then have 

\beqo
\lambda_{\max} ((B_{TG}A) = 1,
\eeqo
thus
\beqo
\rho(E_{TG}) = 1 - \lambda_{\min } (B_{TG}A).
\eeqo

Note that \eqref{eq:spec1} is equivalent to  $\hat M - A $ being positive
semidefinite. This discussion leads to the following corollary.
\begin{corollary} \label{coro:two}
Let  $A\inCnn$  be Hermitian positive definite. Let $ M \inCnn$ such $M - A$ is
Hermitian positive definite.
Let $X_{TG}^{-1} =  M^{-1}$.   
 Let $
\tilde \lambda_1 \leq \tilde \lambda_2 \leq \ldots \leq  \tilde \lambda_n $
be the  eigenvalues of $X_{TG}^{-1}A$  and let $x_i$, $i = 1, \ldots, n$, be
the corresponding eigenvectors. Then
\beq \label{eq:min.case2}
\min_{\substack{P \in \Cnr \\ \rank(P)=r}}\rho(E_{TG}) = 1 - \tilde
\lambda_{r+1}.
\eeq
An optimal interpolation operator is given by 
\beq  \label{eq:min.case2int}
P_{\mathrm{opt}} = [x_{1}, \ldots , x_r].
\eeq
\end{corollary}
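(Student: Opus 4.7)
The plan is to reduce the corollary to a direct application of Theorem \ref{theo:main} with $X = X_{TG} = M$. To do so I must verify the two hypotheses of that theorem under the standing assumption that $M - A$ is Hermitian positive definite: namely (i) that $X_{TG}$ is HPD, and (ii) that $\lambda_{\max}(B_{TG}A) \leq 1$.

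Hypothesis (i) is immediate: since $A$ and $M - A$ are both Hermitian positive definite, so is their sum $M = A + (M - A) = X_{TG}$. For (ii), I would reuse the Rayleigh-quotient chain displayed just before the corollary,
\[
\lambda_{\max}(\tilde U^H X^{-1} \tilde U (\tilde U^H A^{-1} \tilde U)^{-1}) \leq \lambda_{\max}(X^{-1}A),
\]
combined with the spectral characterization $\sigma(B_{TG}A) = \{1\} \cup \sigma(\tilde U^H M^{-1} \tilde U (\tilde U^H A^{-1} \tilde U)^{-1})$ from Theorem \ref{theo:mg:eig}, to conclude $\lambda_{\max}(B_{TG}A) \leq \max\{1,\lambda_{\max}(M^{-1}A)\}$. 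On the other hand, $M - A$ being HPD is equivalent, after conjugating by $A^{-1/2}$, to $I - A^{1/2}M^{-1}A^{1/2}$ being HPD; since the eigenvalues of $A^{1/2}M^{-1}A^{1/2}$ coincide with those of $M^{-1}A$, this yields $\lambda_{\max}(M^{-1}A) < 1$, and hence $\lambda_{\max}(B_{TG}A) \leq 1$.

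With both hypotheses in place, Theorem \ref{theo:main} applies directly with $X = M$, giving
\[
\min_{\substack{P \in \Cnr \\ \rank(P)=r}} \rho(E_{TG}) = 1 - \tilde\lambda_{r+1},
\]
and identifying an optimal interpolation operator as $P_{\mathrm{opt}} = [x_1, \ldots, x_r]$, the first $r$ eigenvectors of $X_{TG}^{-1}A = M^{-1}A$, exactly as stated in \eqref{eq:min.case2int}.

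I do not anticipate a substantive obstacle: the scaling discussion preceding the corollary has already carried out the only nontrivial work, which is to secure $\lambda_{\max}(B_{TG}A) \leq 1$ so that $\rho(E_{TG}) = 1 - \lambda_{\min}(B_{TG}A)$ and one can then minimize the spectral radius by \emph{maximizing} $\lambda_{\min}(B_{TG}A)$. The corollary's hypothesis ``$M - A$ is HPD'' plays the role of the a priori normalization $\hat M^{-1} = M^{-1}/\lambda_{\max}(M^{-1}A)$ introduced before, eliminating the need to rescale $M$ inside the proof.
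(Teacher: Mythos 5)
Your proposal is correct and takes essentially the same route as the paper: the paper's proof also notes that $M$ (hence $X_{TG}$) is Hermitian positive definite, uses $M-A$ HPD together with the Rayleigh-quotient bound and Theorem \ref{theo:mg:eig} to get $\lambda_{\max}(B_{TG}A)=1$, and then invokes Theorem \ref{theo:main}. Your only addition is spelling out explicitly why $\lambda_{\max}(M^{-1}A)<1$ and why $M$ itself is HPD, which the paper leaves implicit.
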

\begin{proof}
The matrix $X_{TG}^{-1} = M^{-1}$ is Hermitian positive definite. Moreover,
since $M - A$ is also Hermitian positive definite the eigenvalues of
$X_{TG}^{-1}A$ are less then  one. Thus, with Theorem \ref{theo:mg:eig},
$\lambda_{\max}(B_{TG}A) = 1$.  So, with  Theorem \ref{theo:main}  we obtain
\eqref{eq:min.case2} and \eqref{eq:min.case2int}.
\end{proof}

Now we will compare the optimal interpolation with respect to the $A$-norm as
given in Corollary \ref{coro:one}, with  the optimal interpolation with respect
to the spectral radius as given in Corollary \ref{coro:two}. Using $M=M^H$ and
$M - A$ Hermitian positive definite, the vectors used in  Corollary
\ref{coro:one}
are  eigenvectors of
\beqo
X^{-1}_{STG}A = 2M^{-1}A - M^{-1}AM^{-1}A,
\eeqo
while in Corollary \ref{coro:one} we use  the eigenvectors of
\beqo
X^{-1}_{TG}A = M^{-1}A.
\eeqo
But $X^{-1}_{STG}A$ is just a polynomial in $M^{-1}A$ , where   the polynomial
is given by
\beq \label{eq:pol}
p(t) = 2t - t^2.
\eeq
Thus, the eigenvectors of both matrices are the same. Moreover, the
eigenvalues are  related
by   the above polynomial. Hence, the eigenvectors corresponding  to the
smallest eigenvalues of
$X^{-1}_{STG}A$  are the same   eigenvectors that correspond to the smallest
eigenvalues of $X^{-1}_{TG}A$. In consequence, the optimal interpolation in
Corollary
\ref{coro:one}  and Corollary
\ref{coro:two} are the same, if we assume that $M - A$ is Hermitian positive
definite.

Next, let us have  a closer look to the non-symmetric two-grid method and
avoid
scaling. We assume  that
the smoother $M$ is Hermitian  and  leads to a convergent scheme, i.e.
\beq  \label{eq:smoother:con:}
\rho(I - M^{-1}A) < 1, 
\eeq
which implies $\sigma(M^{-1}A) \subset (0,2).$ Thus, for the matrix $E_{TG}$
we have as above
\beqo
\rho(E_{TG}) = 1 - \lambda_{\min}(B_{TG}^{-1}A) < 1 
\ \ \mbox{or} \ \
\rho(E_{TG}) = -(1 - \lambda_{\max}(B_{TG}^{-1}A)) < 1.
\eeqo

Let 
\beqo
Z = \tilde U^HX_{TG}^{-1}\tilde U (\tilde U^HA^{-1}\tilde U)^{-1}.
\eeqo

Then we have $\sigma(Z) \subset (0,2)$ and with  Theorem \ref{theo:mg:eig}
\beqo
\sigma(E_{TG}) = \{0\} \cup \sigma(I-Z).
\eeqo
But $\sigma(I-Z) \subset (-1,1) $. To get an upper bound for  the minimal
spectral radius of $E_{TG}$
over all interpolation we consider the matrix $(I - Z)^2$. Our next theorem
deals with this case.

\begin{theorem} \label{theo:main2}
Let  $A\inCnn$  be Hermitian positive definite, and let $ M \inCnn$ be
Hermitian
such that $\rho(I - M^{-1}A) < 1$.
Let $X_{TG}^{-1} =  M^{-1}$,
and let $\{(\lambda_i,y_i)\}_{i=1}^n$ be the eigenpairs of $(I -
X_{TG}^{-1}A)^2$ with $
\hat \lambda_1 \leq \hat \lambda_2 \leq \ldots \leq  \hat \lambda_n $. Then
\beq \label{eq:min.case3}
\min_{\substack{P \in \Cnr \\ \rank(P)=r}}\rho(E_{TG}) \leq (\hat
\lambda_{n-r})^{\frac{1}{2}}.
\eeq
The spectral radius $(\hat \lambda_{n-r})^{\frac{1}{2}}$ can be achieved by the
interpolation operator
\beq  \label{eq:min.case3int}
\hat P = [y_{n-r+1}, \ldots , y_n].
\eeq
\end{theorem}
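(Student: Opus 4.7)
The plan is to compute $\rho(E_{TG})$ directly for the particular choice $P=\hat P$ and verify that it equals $(\hat\lambda_{n-r})^{1/2}$; since this is the spectral radius attained by one concrete interpolation operator, the minimum over all full-rank $P$ is automatically at most this value, which is precisely the claim.

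First I would exploit that, because $M$ is Hermitian, the matrix $T := I - M^{-1}A$ is self-adjoint with respect to the $A$-inner product. Using $M = M^{H}$ and $A=A^{H}$, a direct check gives $(Tu,v)_A = u^{H}(A - AM^{-1}A)v = (u,Tv)_A$. Therefore $T$ is diagonalizable with real eigenvalues $s_1,\dots,s_n$ and admits an $A$-orthonormal basis of eigenvectors $y_1,\dots,y_n$, meaning $Ty_i = s_i y_i$ and $y_i^{H}Ay_j = \delta_{ij}$. These same $y_i$ are eigenvectors of $T^2 = (I - X_{TG}^{-1}A)^2$ with eigenvalues $s_i^2$, so after reindexing to match the ordering $\hat\lambda_1 \le \cdots \le \hat\lambda_n$ we have $|s_i|^2 = \hat\lambda_i$.

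Next I would write $E_{TG} = T\Pi$, where $\Pi := I - P(P^{H}AP)^{-1}P^{H}A$ is the $A$-orthogonal projector onto the $A$-orthogonal complement of $\mathcal{R}(P)$. For $P = \hat P = [y_{n-r+1},\dots,y_n]$, the $A$-orthonormality of the $y_i$ immediately identifies this $A$-orthogonal complement as $\spann\{y_1,\dots,y_{n-r}\}$, so $\Pi y_i = y_i$ for $i \le n-r$ and $\Pi y_i = 0$ for $i > n-r$. Since the same basis diagonalizes $T$, it also diagonalizes $E_{TG}$: $E_{TG}y_i = s_i y_i$ for $i \le n-r$ and $E_{TG}y_i = 0$ otherwise. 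Consequently
\[
\rho(E_{TG}) \;=\; \max_{1 \le i \le n-r} |s_i| \;=\; \sqrt{\hat\lambda_{n-r}},
\]
and \eqref{eq:min.case3} and \eqref{eq:min.case3int} follow. The nonsingularity of $A_C = \hat P^{H}A\hat P = I$ and hence the validity of the construction is automatic.

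The computation is essentially routine; the one substantive point is the $A$-self-adjointness of $T$, which is exactly what allows a simultaneous $A$-orthogonal diagonalization of $T$ and $T^2$ and lets $\Pi$ act diagonally in the same basis. With that observation in hand no appeal to Theorem \ref{theo:mg:eig} is strictly necessary: the eigenvalue count falls out of the invariant $A$-orthogonal splitting $\C^{n} = \mathcal{R}(\hat P) \oplus \spann\{y_1,\dots,y_{n-r}\}$. Without the Hermitian structure of $M$ this simultaneous diagonalization would fail, and one would have to work directly with the quotient $\tilde U^{H}X_{TG}^{-1}\tilde U(\tilde U^{H}A^{-1}\tilde U)^{-1}$ in Theorem \ref{theo:mg:eig} and settle for an inequality rather than an equality.
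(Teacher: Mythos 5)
Your proof is correct, but it follows a genuinely different route from the paper. The paper obtains Theorem \ref{theo:main2} ``immediately from the above arguments,'' i.e.\ from the spectral characterization of Theorem \ref{theo:mg:eig}: writing $\sigma(E_{TG})=\{0\}\cup\sigma(I-Z)$ with the quotient $Z=\tilde U^HX_{TG}^{-1}\tilde U(\tilde U^HA^{-1}\tilde U)^{-1}$ built from a Euclidean-orthonormal basis $\tilde U$ of $\ran(P)^{\perp}$, noting $\sigma(I-Z)\subset(-1,1)$, and passing to $(I-Z)^2$ so that the Courant--Fischer argument of Lemma \ref{theo:lemma1} can be brought to bear. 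You bypass Theorem \ref{theo:mg:eig} entirely: the $A$-self-adjointness of $T=I-M^{-1}A$ gives an $A$-orthonormal eigenbasis $\{y_i\}$ shared by $T$ and $T^2$, and since $E_{TG}=T\Pi$ with $\Pi$ the $A$-orthogonal projector onto the $A$-orthogonal complement of $\ran(\hat P)$, this basis diagonalizes $E_{TG}$ outright, yielding $\sigma(E_{TG})=\{s_1,\dots,s_{n-r}\}\cup\{0\}$ and hence $\rho(E_{TG})=(\hat\lambda_{n-r})^{1/2}$ for $P=\hat P$, from which \eqref{eq:min.case3} is immediate. The paper's route keeps the whole development uniform (everything flows from the one spectrum characterization, which also covers non-Hermitian $A$), whereas yours is elementary, self-contained, and in fact sharper on one point the terse published proof glosses over: the vectors $y_i$ must be taken as eigenvectors of $I-M^{-1}A$ itself (a legitimate choice of eigenvectors of its square), not arbitrary eigenvectors of $(I-X_{TG}^{-1}A)^2$. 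When $T$ has an eigenvalue pair $\pm s$, the square has a multiple eigenvalue $s^2$ and a mixed eigenbasis of $T^2$ need not diagonalize $T$; in that case $\hat P$ built from such mixed vectors can give a strictly smaller spectral radius (the corresponding block of $T\Pi$ becomes nilpotent), so the ``achieved'' claim holds only for your choice, while the inequality \eqref{eq:min.case3} is unaffected. Your explicit identification of the surviving eigenvalues is exactly the content the paper delegates to the preceding discussion, so the result stands with a cleaner and slightly more careful justification.
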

\begin{proof}
The proof follows immediately from the above arguments. 
\end{proof}


Note that the above Theorems correspond to
clear statements: the optimal interpolation operators are given
by
those
eigenvectors of $X^{-1}A$ for which the smoothing is slowest to converge.

\section{The optimal interpolation with respect to the  condition number}

Note  that for symmetric multigrid where  $M + M^H - A$ is Hermitian  positive
definite  the largest eigenvalue of
$B_{STG}A$ is one (see e.g. \cite{Not15}).  As seen in the proof of Corollary
\ref{coro:two}, the same holds  for  $B_{TG}A$ when we assume  that  $M - A$ is
Hermitian  positive definite. The later
assumption can be obtained  by scaling, however, this scaling affects the
spectral radius of the
error propagation matrix. But for the condition number of the multigrid
preconditioned system, this scaling has no effect.

Theorem \ref{theo:mg:eig} characterizes the  spectrum of $B_{STG}A$ and
$B_{TG}A$. Following the arguments above, where   we found optimal
interpolation operators, such that
$\lambda_{\min}(B_{STG}A)$ and $\lambda_{\min}(B_{TG}A)$ are maximal, we obtain
that the same interpolation operators are optimal with respect to the condition
number $\kappa$ of the preconditioned system. This leads to the next result.

 \begin{theorem}
Let  $A\inCnn$  be Hermitian positive definite. Let $ M \inCnn$ such $M + M^H -
A$ is Hermitian positive definite.
Let $X_{STG}^{-1}$  be as in \eqref{defX}.  
 Let $\{(\lambda_i,v_i)\}_{i=1}^n$ be the eigenpairs of $X_{STG}^{-1}A$, where
$
\lambda_1 \leq \lambda_2 \leq \ldots \leq  \lambda_n $. Then
\beq
\min_{\substack{P \in \Cnr \\ \rank(P)=r}} \kappa(B_{STG}A) =
\frac{1}{\lambda_{r+1}}.
\eeq
An optimal interpolation operator is given by 
\[
P_{\mathrm{opt}} = [v_{1}, \ldots , v_r].
\]
\end{theorem}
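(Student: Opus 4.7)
The plan is to reduce the minimization of the condition number to the maximum--of--minimum--eigenvalue problem already resolved in Lemma~\ref{theo:lemma1}. First, I would invoke, as noted at the opening of Section~3 and supported by \cite[Theorem~3.16]{Vas08} and \cite{Not15}, the fact that under the hypothesis $M+M^H-A$ HPD one has $\lambda_{\max}(B_{STG}A) = 1$. Since $B_{STG}$ is Hermitian (as recorded just above \eqref{ben}) and $\rho(E_{STG})<1$ forces every eigenvalue of $B_{STG}A$ into $(0,1]$, the matrix $B_{STG}A$ is similar to a Hermitian positive definite matrix and
\[
\kappa(B_{STG}A) = \frac{\lambda_{\max}(B_{STG}A)}{\lambda_{\min}(B_{STG}A)} = \frac{1}{\lambda_{\min}(B_{STG}A)}.
\]
Hence minimizing $\kappa(B_{STG}A)$ over $P$ with $\rank(P)=r$ is equivalent to maximizing $\lambda_{\min}(B_{STG}A)$ over the same set.

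Next, I would apply Theorem~\ref{theo:mg:eig} with $R=P^H$: letting $\tilde U \in \C^{n\times n-r}$ be an orthonormal basis matrix of $\im(P)^\perp$,
\[
\sigma(B_{STG}A) = \{1\} \cup \sigma\bigl(\tilde U^H X_{STG}^{-1}\tilde U (\tilde U^H A^{-1}\tilde U)^{-1}\bigr).
\]
Because $\lambda_{\max}(B_{STG}A)=1$, every eigenvalue in the right-hand factor is bounded above by one, so $\lambda_{\min}(B_{STG}A)$ coincides with $\lambda_{\min}$ of the reduced matrix. Lemma~\ref{theo:lemma1} then gives that the maximum of this reduced minimum eigenvalue over orthonormal $\tilde U$ equals $\mu_{r+1}$; and since the eigenvalues of $X_{STG}^{-1}w=\mu A^{-1}w$ coincide with those of $X_{STG}^{-1}A$, this maximum equals $\lambda_{r+1}$. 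Therefore $\min_P \kappa(B_{STG}A) = 1/\lambda_{r+1}$.

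For the construction of the optimizer I would repeat the argument from the proof of Theorem~\ref{theo:main} verbatim: with $P_{\mathrm{opt}} = [v_1,\ldots,v_r]$, the vectors $X_{STG}v_i$ are eigenvectors of the generalized problem $X^{-1}w=\mu A^{-1}w$ and are pairwise $X_{STG}^{-1}$-orthogonal, so the $v_i$ are Euclidean-orthogonal to $w_{r+1},\ldots,w_n$, which realises the optimizer $\tilde W$ of Lemma~\ref{theo:lemma1} via the orthogonal complement. The main obstacle I expect is the clean justification that $\lambda_{\max}\bigl(\tilde U^H X_{STG}^{-1}\tilde U(\tilde U^H A^{-1}\tilde U)^{-1}\bigr)\leq 1$ for every admissible $\tilde U$, which is what allows us to replace $\lambda_{\min}(B_{STG}A)$ by the minimum of the reduced spectrum and thereby discard the spurious eigenvalue $1$ coming from Theorem~\ref{theo:mg:eig}; this can be handled by the same Rayleigh quotient comparison used earlier to obtain \eqref{eq:spec1}. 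Otherwise the proof is a direct transfer of Lemma~\ref{theo:lemma1} and Theorem~\ref{theo:main} to the condition number functional.
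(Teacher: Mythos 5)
Your proposal is correct and follows essentially the same route the paper takes (which it only sketches in the discussion preceding the theorem): use $\lambda_{\max}(B_{STG}A)=1$ together with the Hermitian structure of $B_{STG}$ to write $\kappa(B_{STG}A)=1/\lambda_{\min}(B_{STG}A)$, then maximize $\lambda_{\min}(B_{STG}A)$ over $P$ via Theorem \ref{theo:mg:eig}, Lemma \ref{theo:lemma1} and the eigenvector argument of Theorem \ref{theo:main}. The ``obstacle'' you flag is indeed settled by the Rayleigh-quotient bound $\lambda_{\max}\bigl(\tilde U^HX_{STG}^{-1}\tilde U(\tilde U^HA^{-1}\tilde U)^{-1}\bigr)\leq\lambda_{\max}(X_{STG}^{-1}A)\leq 1$, exactly as you propose.
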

  
Our final result gives the optimal interpolation operator for the non-symmetric
two-grid
method with respect to the condition number $\kappa$.

\begin{theorem}
Let  $A\inCnn$  be Hermitian positive definite. Let $ M \inCnn$ be Hermitian
positive definite  such that $\rho(I - M^{-1}A) < 1.$
Let $X_{TG}^{-1} =  M^{-1}$, and let $\{(\tilde \lambda_i, x_i)\}_{i=1}^n$ be 
the eigenpairs of $X_{TG}^{-1}A$  where  $
\tilde \lambda_1 \leq \tilde \lambda_2 \leq \ldots \leq  \tilde \lambda_n $.
Then
\beqo
\min_{\substack{P \in \Cnr \\ \rank(P)=r}}  \kappa(B_{TG}A) = \frac{\tilde
\lambda_{n}}{\tilde
\lambda_{r+1}}
\eeqo
An optimal interpolation operator is given by 
\beqo
P_{\mathrm{opt}} = [x_{1}, \ldots , x_r].
\eeqo
\end{theorem}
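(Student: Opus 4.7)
The plan is to mirror the proof of Corollary \ref{coro:two}, using Theorem \ref{theo:mg:eig} together with the scaling invariance of the condition number highlighted in the preamble to reduce to the case where $\hat M - A$ is positive semidefinite.

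First I would apply Theorem \ref{theo:mg:eig} with $X^{-1} = X_{TG}^{-1} = M^{-1}$ and $R = P^H$ to obtain
\[
\sigma(B_{TG}A) = \{1\} \cup \sigma\!\left(\tilde U^H M^{-1} \tilde U \bigl(\tilde U^H A^{-1} \tilde U\bigr)^{-1}\right),
\]
for any $\tilde U$ with orthonormal columns spanning $\mathcal{R}(P)^\perp$. Since rescaling $M \to \alpha M$ does not change the eigenvectors of $M^{-1}A$ and merely divides every eigenvalue in the second spectrum by $\alpha$, choosing $\alpha = 1/\tilde\lambda_n$ produces a smoother $\hat M$ with $\hat M - A$ positive semidefinite, rescaled eigenvalues $\hat\lambda_i = \tilde\lambda_i / \tilde\lambda_n$, and the same eigenvectors $x_i$.

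In the rescaled setting Corollary \ref{coro:two} applies: $\lambda_{\max}(B_{TG}A) = 1$, and $P_{\mathrm{opt}} = [x_1, \ldots, x_r]$ maximizes $\lambda_{\min}(B_{TG}A)$ to the value $\hat\lambda_{r+1} = \tilde\lambda_{r+1}/\tilde\lambda_n$. Consequently $\min_P \kappa(B_{TG}A) = 1/\hat\lambda_{r+1} = \tilde\lambda_n/\tilde\lambda_{r+1}$ for the rescaled problem, and by the scaling invariance of the condition number the same minimum value and optimizer transfer back to the original, unscaled formulation.

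The main obstacle is the matching lower bound $\kappa(B_{TG}A) \geq \tilde\lambda_n/\tilde\lambda_{r+1}$ for arbitrary admissible $P$. In the rescaled setting this follows from Lemma \ref{theo:lemma1} applied to the generalized eigenvalue problem $\hat M^{-1} w = \mu A^{-1} w$: the maximum over $\tilde U$ of $\lambda_{\min}(\tilde U^H \hat M^{-1} \tilde U (\tilde U^H A^{-1} \tilde U)^{-1})$ is exactly $\hat\lambda_{r+1}$, which together with $\lambda_{\max}(B_{TG}A) = 1$ forces $\kappa(B_{TG}A) \geq 1/\hat\lambda_{r+1}$. Checking that this bound is sharp at $P_{\mathrm{opt}}$ uses the $M$-biorthogonality $x_i^H M x_j = \delta_{ij}$ of the eigenvectors, exactly as in the proof of Theorem \ref{theo:main}: it ensures that $\mathcal{R}(P_{\mathrm{opt}})^\perp = \spann\{M x_{r+1}, \ldots, M x_n\}$ and that the compressed pencil has spectrum $\{\hat\lambda_{r+1}, \ldots, \hat\lambda_n\}$, closing the argument.
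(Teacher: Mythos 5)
Your approach is the one the paper intends: the paper offers no separate proof of this theorem, only the preceding discussion (rescale the smoother so that $\hat M - A$ is positive semidefinite, note that then $\lambda_{\max}(B_{TG}A)=1$ as in the proof of Corollary \ref{coro:two}, maximize $\lambda_{\min}(B_{TG}A)$ exactly as in Lemma \ref{theo:lemma1} and Theorem \ref{theo:main}, and assert that the rescaling does not affect the condition number), and your sharpness argument via the $M$-orthogonality of the $x_i$ and the identification $\ran(P_{\mathrm{opt}})^\perp=\spann\{Mx_{r+1},\dots,Mx_n\}$ is exactly the mechanism of Theorem \ref{theo:main}. Two small slips: with the convention $M\mapsto\alpha M$ you need $\alpha=\tilde\lambda_n$ (the paper's $\hat M^{-1}=M^{-1}/\lambda_{\max}(M^{-1}A)$), not $\alpha=1/\tilde\lambda_n$; and the rescaled $\hat M-A$ is only positive semidefinite, so strictly you should invoke Theorem \ref{theo:mg:eig} and Lemma \ref{theo:lemma1} directly rather than Corollary \ref{coro:two}, whose hypothesis is strict definiteness.

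The substantive weak point is the transfer step ``by the scaling invariance of the condition number.'' Reading $\kappa(B_{TG}A)$ as the ratio of extreme eigenvalues (the only reading consistent with the claimed formula), this invariance is not automatic: by Theorem \ref{theo:mg:eig}, $1\in\sigma(B_{TG}A)$ for every admissible $P$, and this eigenvalue does not move when $M$ is rescaled, while the remaining part $\sigma(Z)$ is divided by the scaling factor. Hence for the unscaled method $\kappa(B_{TG}A)=\max\{1,\lambda_{\max}(Z)\}/\min\{1,\lambda_{\min}(Z)\}$, and this coincides with the rescaled value (and with the claimed optimum $\tilde\lambda_n/\tilde\lambda_{r+1}$) only when $\tilde\lambda_{r+1}\le 1\le\tilde\lambda_n$. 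The stated hypotheses do not guarantee this: for $M=2A$ one has $\rho(I-M^{-1}A)=\tfrac12<1$, all $\tilde\lambda_i=\tfrac12$, and $\kappa(B_{TG}A)=2$ for every $P$, whereas the formula gives $1$. This gap is inherited from the paper itself, which asserts without proof that the scaling ``has no effect'' on the condition number; a complete argument should either add the (typically harmless) assumption $\tilde\lambda_{r+1}\le 1\le\tilde\lambda_n$ or work with the unscaled extremal eigenvalues directly.
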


Note, that  in all cases of the previous sections any other interpolation
operator $\tilde P$  with  $\ran (\tilde P) = \ran (P_{\mathrm{opt}})$ is also
optimal.

\section{Conclusion}
As mentioned in \cite{XuZ17}, the  $A$ in AMG methods can also be understood as
an
$A$
for
Abstract 
Multigrid Methods. Here  we contributed to the  theory of  
abstract multigrid methods by presenting alternate derivations of previously
known results and by establishing new results. Building on a  result from
\cite{GarKN18} which gives a
characterization of the spectrum of the
error propagation operator and the preconditioned system of two-grid methods,
we derived optimal interpolation operators with respect  to the $A$-norm and
the spectral radius of the  error propagation operator matrix in a
short way. 
We also showed that these interpolation operators
are optimal  with respect to the condition number of the preconditioned system.

\bibliographystyle{siamplain}
\bibliography{GarKN2.bib}

\end{document}